\crefname{section}{Section}{Sections}
\crefname{subsection}{\S}{\S\S}
\theoremstyle{plain}
\newtheorem{lemma}{Lemma}[section]
\newtheorem{proposition}[lemma]{Proposition}
\newtheorem{theorem}[lemma]{Theorem}
\newtheorem{question}[lemma]{Question}
\theoremstyle{nonumberplain}
\theoremstyle{plain}
\newtheorem{definition}[lemma]{Definition}
\newtheorem{remark}[lemma]{Remark}
\newtheorem{convention}[lemma]{Convention}
\newtheorem{notation}[lemma]{Notation}
\crefname{definition}{definition}{definitions}
\crefname{ex}{example}{examples}
\crefname{remark}{remark}{remarks}
\crefname{convention}{convention}{conventions}
\crefname{notation}{notation}{notations}
\crefname{question}{question}{questions}
\crefname{lemma}{lemma}{lemmas}
\crefname{proposition}{proposition}{propositions}
\crefname{corollary}{corollary}{corollaries}
\crefname{theorem}{theorem}{theorems}
\crefname{enumi}{}{}
\crefname{assumption}{assumption}{Assumptions}
\crefname{equation}{}{}
\numberwithin{equation}{section}
\renewcommand{\theequation}{\thesection-\arabic{equation}}
\theoremstyle{nonumberplain}
\newtheorem{proof}{Proof}
\newcommand\pf[1]{\newtheorem{#1}{Proof of \Cref{#1}}}
\newcommand\bR{{\mathbb R}}
\newcommand\bZ{{\mathbb Z}}
\newcommand\cF{{\mathcal F}}
\newcommand\numberthis{\addtocounter{equation}{1}\tag{\theequation}}
\newcommand{\qedhere}{\mbox{}\hfill\ensuremath{\blacksquare}}
\title{(In)equality distance patterns and embeddability into Hilbert spaces}
\author{Alexandru Chirvasitu}
\begin{document}

\date{}

\newcommand{\Addresses}{{
  \bigskip
  \footnotesize

  \textsc{Department of Mathematics, University at Buffalo, Buffalo,
    NY 14260-2900, USA}\par\nopagebreak \textit{E-mail address}:
  \texttt{achirvas@buffalo.edu}

}}

\maketitle

\begin{abstract}
  We show that compact Riemannian manifolds, regarded as metric spaces with their global geodesic distance, cannot contain a number of rigid structures such as (a) arbitrarily large regular simplices or (b) arbitrarily long sequences of points equidistant from pairs of points preceding them in the sequence. All of this provides evidence that Riemannian metric spaces admit what we term loose embeddings into finite-dimensional Euclidean spaces: continuous maps that preserve both equality as well as inequality.

  We also prove a local-to-global principle for Riemannian-metric-space loose embeddability: if every finite subspace thereof is loosely embeddable into a common $\mathbb{R}^N$, then the metric space as a whole is loosely embeddable into $\mathbb{R}^N$ in a weakened sense.
\end{abstract}

\noindent {\em Key words: Riemannian manifold; geodesic; isometry; Euclidean distance}

\vspace{.5cm}

\noindent{MSC 2010: 30L05; 53B20; 53B21}


\section*{Introduction}

The present note is a follow-up on \cite{chi-qmtr-count}, where the following notion was introduced (\cite[Definition 2.2]{chi-qmtr-count}):

\begin{definition}\label{def:le}
  Let $(X,d_X)$ and $(Y,d_Y)$ be two metric spaces. A continuous map $f:X\to Y$ is a {\it loosely isometric} (or just {\it loose}) {\it embedding} if
  \begin{equation*}
    d_X(x,x')\mapsto d_Y(f x,f x')
  \end{equation*}
  is a well-defined one-to-one map on the codomain of $d_X$.

  $(X,d_X)$ is {\it loosely embeddable} (or LE) in $(Y,d_Y)$ if it admits such a $f:X\to Y$, and it is just plain LE (without specifying $Y$) if it is loosely embeddable into some finite-dimensional Hilbert space.
\end{definition}

In other words, $f$ turns (un)equal distances into (un)equal distances respectively. Note in particular that loose embeddings are automatically one-to-one, so they are embeddings. It is the condition of being {\it isometric} that is being loosened.

The concept was originally motivated by the fact that, by (a slight paraphrase of) \cite[Corollary 4.9]{metric}, LE compact metric spaces have {\it quantum automorphism groups}, i.e. they admit universal isometric actions by compact quantum groups. Despite its origin in the non-commutative-geometry considerations central to \cite{chi-qmtr-count}, the notion seems to hold some independent interest of its own. Roughly speaking:

{\it
  Loose embeddability captures the combinatorial patterns of distance (in)equality achievable in Hilbert spaces. 
}

We focus in particular on {\it Riemannian} compact metric spaces, i.e. those obtained by equipping Riemannian manifolds with their global geodesic distance (\Cref{def:riem}).   According to \cite[Theorem 0.2]{cg} compact connected Riemannian manifolds (with or without boundary) always have quantum isometry groups, which in fact coincide with their {\it classical} isometry groups. This means that in the present context we are departing from the {\it initial} motivation for considering loose embeddability, namely the existence of quantum isometry groups. The concept nevertheless suggests some apparently-non-trivial questions in metric and Riemannian geometry.

These seem particularly well suited for loose embeddability, as attested by several results ruling out non-LE metric configurations in the Riemannian context: 

\begin{itemize}
\item \Cref{le:1dim} is the simple remark that 1-dimensional Riemannian manifolds are always loosely embeddable.
\item In \Cref{pr:riemsimpl} we observe that a (compact) Riemannian metric space cannot contain $n$-tuples of equidistant points for arbitrarily large $n$. 
\item Generalizing this, \Cref{pr.riem-flg} shows that compact Riemannian metric spaces do not contain arbitrarily large sets of pairs $\{x_i,y_i\}$ of points with $x_i$ and $y_j$ both equidistant from $x_i$ and $y_i$ for all $j>i$. This rules out (in the Riemannian case) a subtler class of counterexamples to loose metric embeddability given by \Cref{le.no-flg}. 
\end{itemize}
In short, Riemannian compact metric spaces make for poor counterexamples to loose metric embeddability.

\Cref{se:qu} further contains a number of questions related to loose embeddability, and answer a weakened form of \Cref{qu:1} affirmatively in the Riemannian setting: \Cref{pr:riemq1} says, roughly speaking, that if the finite subspaces of a compact Riemannian metric space $(M,d)$ are {\it uniformly} loosely embeddable (i.e. loosely embeddable into Hilbert spaces of uniformly bounded dimension) then $(M,d)$ itself is loosely embeddable in a weak sense.

\subsection*{Acknowledgements}

This work was partially supported by NSF grants DMS-1801011 and DMS-2001128.


\section{Distance configurations in Riemannian metric spaces}\label{se:loose}

It is a natural problem to determine to what extent various classes of metric spaces are loosely embeddable in the sense of \Cref{def:le}. Of special interest, for instance, are Riemannian manifolds equipped with the geodesic metric. \cite{doC92,berg-pan} are good sources for the Riemannian geometry we will peruse.

\begin{definition}\label{def:riem}
  A {\it Riemannian metric space} is a connected Riemannian manifold equipped with the global geodesic metric.
\end{definition}

Unless specified otherwise, all of our metric spaces are assumed compact; we thus often drop that adjective for brevity. Note first:

\begin{lemma}\label{le:1dim}
  One-dimensional compact Riemannian metric spaces are loosely embeddable. 
\end{lemma}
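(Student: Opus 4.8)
The plan is to reduce everything to the classification of compact connected $1$-manifolds. Up to diffeomorphism there are exactly two of them: the closed interval (when there is boundary) and the circle $S^1$ (when there is not). A Riemannian metric on such a manifold is flat for trivial reasons, and parametrizing by arc length turns the resulting geodesic metric space into an \emph{isometric} copy of either a Euclidean segment $[0,L]$ or the standard round circle $\mathbb{R}/L\mathbb{Z}$ of total length $L$, where $L$ is the Riemannian length. Since loose embeddability (\Cref{def:le}) depends only on the metric-space structure, it suffices to produce a loose embedding in each of these two cases.

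The interval case I expect to be immediate: the arc-length parametrization realizes $[0,L]$ as a subset of $\mathbb{R}$ with its restricted metric, so it is an honest isometric embedding, and isometric embeddings are in particular loose (the induced map on the codomain of the distance function being the identity).

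For the circle I would use the standard round embedding $\iota$ of $\mathbb{R}/L\mathbb{Z}$ onto the circle of radius $r=L/2\pi$ in $\mathbb{R}^2$. This $\iota$ is a continuous injection, so the only thing left to verify is looseness, i.e. that the induced assignment
\[
  d(s,t)\ \longmapsto\ \|\iota(s)-\iota(t)\|
\]
is a well-defined one-to-one map on the set of attained geodesic distances. The geodesic distance $d(s,t)$ takes values in $[0,L/2]$ (it is the length of the shorter of the two arcs), and a direct computation of the chord length yields $\|\iota(s)-\iota(t)\| = 2r\sin\!\bigl(\pi d(s,t)/L\bigr) = \tfrac{L}{\pi}\sin\!\bigl(\pi d(s,t)/L\bigr)$. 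Because $\sin$ is strictly increasing on $[0,\pi/2]$, the function $\delta\mapsto \tfrac{L}{\pi}\sin(\pi\delta/L)$ is strictly increasing on $[0,L/2]$; hence the displayed assignment is a strictly monotone, in particular well-defined and injective, reparametrization of the geodesic distance, and $\iota$ is a loose embedding into $\mathbb{R}^2$.

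The entire content sits in the circle case, and the one step demanding care is the monotonicity of chord length as a function of geodesic distance. This hinges precisely on the fact that the geodesic distance is confined to $[0,L/2]$, the range on which the sine is monotone; on the full interval $[0,L]$ monotonicity (hence injectivity of the induced distance map) would fail, which is exactly why the shorter-arc convention is what makes the round embedding loose.
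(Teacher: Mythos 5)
Your proof is correct and takes essentially the same route as the paper's one-sentence argument (realize the circle as a round circle in the plane and use that chord length is a strictly increasing function of geodesic distance on $[0,L/2]$); you simply make explicit the monotonicity computation the paper leaves implicit. You also handle the boundary (interval) case, which the paper's proof silently omits by asserting that every compact connected $1$-manifold is isometric to a circle.
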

\begin{proof}
  Indeed, 1-dimensional connected compact Riemannian manifolds are isometric to the unit circle, which can be loosely embedded into the plane via its standard origin-centered-unit-circle realization.
\end{proof}

It is observed in \cite[Example 2.3]{chi-qmtr-count} that metric spaces containing {\it regular $n$-simplices} (i.e. point $n$-tuples with equal pairwise distances) for each $n$ cannot be loosely embeddable. The following observation rules this out for Riemannian metric spaces.

\begin{proposition}\label{pr:riemsimpl}
  Let $(M,d)$ be a compact Riemannian manifold with its geodesic metric. There is an upper bound on the number of vertices of a regular simplex in $M$.
\end{proposition}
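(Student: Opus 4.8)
The plan is to bound the number of vertices $N$ of a regular simplex in $(M,d)$ by splitting into two scales according to the common edge length $r$, exploiting the fact that a compact Riemannian manifold has \emph{bounded geometry}: writing $m=\dim M$, its injectivity radius $i_0=\mathrm{inj}(M)$ is positive and its sectional curvature is bounded, say $|K|\le\Lambda$. Both facts are consequences of compactness (continuity and strict positivity of the injectivity radius, continuity of curvature on the compact $M$). I fix once and for all a threshold $\epsilon_0<i_0$.

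The crucial input is a \emph{uniform comparison estimate}. For a base point $p_0$ and two further points $p,q$ within distance $\epsilon_0$ of $p_0$, set $u=\exp_{p_0}^{-1}(p)$ and $u'=\exp_{p_0}^{-1}(q)$ in $T_{p_0}M$ (well defined and unique since $\epsilon_0<i_0$). Writing the metric in normal coordinates at $p_0$ and controlling the curvature terms, or invoking Rauch/Toponogov comparison, one obtains $d(p,q)^2=|u-u'|^2+O(r^4)$ with $r=\max\{|u|,|u'|\}$ and with an error that is \emph{uniform} over $p_0\in M$. This uniformity is exactly where compactness enters, via $|K|\le\Lambda$ and $i_0>0$. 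I expect this comparison estimate, and especially the uniformity of its constant, to be the main technical point; everything downstream is elementary packing.

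Small-scale regime. Take one vertex as the base point $p_0$ and, assuming the common edge length satisfies $r\le\epsilon_0$, let $v_1,\dots,v_{N-1}\in T_{p_0}M\cong\bR^m$ be the unit vectors pointing along the minimizing geodesics to the remaining vertices. The comparison estimate turns each equality $d(p_i,p_j)=r$ into $\langle v_i,v_j\rangle=\tfrac12+O(r^2)$. Hence there is a threshold $\epsilon_1\le\epsilon_0$ such that for $r\le\epsilon_1$ one has $\langle v_i,v_j\rangle\le\tfrac34$ for all $i\ne j$; the $v_i$ then lie on $S^{m-1}$ with pairwise angle at least $\theta_\ast=\arccos\tfrac34>0$, so the open geodesic caps of radius $\theta_\ast/2$ about them are pairwise disjoint. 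A spherical volume count then bounds their number, giving $N-1\le N_0(m)$ for a constant $N_0(m)$ depending only on $m$. (Note that only the lower bound on the pairwise angles is used, so the circular-looking perturbation issue for the Gram matrix of the $v_i$ is avoided entirely.)

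Large-scale regime. If instead $r\ge\epsilon_1$, I use a packing argument in $M$ itself: since $d(p_i,p_j)=r$, the open balls $B(p_i,r/2)$ are pairwise disjoint by the triangle inequality, hence so are the smaller balls $B(p_i,\epsilon_1/2)$. Each of the latter has volume at least $v_1=\min_{p\in M}\mathrm{Vol}\,B(p,\epsilon_1/2)>0$, which is positive by compactness, so $N\le\mathrm{Vol}(M)/v_1$. Combining the two regimes, every regular simplex in $M$ has at most $\max\{N_0(m)+1,\ \mathrm{Vol}(M)/v_1\}$ vertices, which is the asserted uniform bound.
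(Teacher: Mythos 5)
Your proof is correct, and it shares the paper's opening move---looking at the unit directions from one vertex and exploiting compactness of the tangent sphere---but then takes a genuinely different route. The paper runs a single contradiction argument valid at every scale: if a regular simplex has enough vertices, two of them, $v_1$ and $v_2$, subtend an arbitrarily small angle $\varepsilon$ at a third vertex $v_0$, and Toponogov's comparison theorem, using only the \emph{lower} sectional curvature bound $K$ furnished by compactness, bounds $d(v_1,v_2)$ above by the third side of the isosceles comparison triangle with legs $\ell=d(v_0,v_1)=d(v_0,v_2)$ and apex angle $\varepsilon$ in the curvature-$K$ space form; that side is eventually $<\ell$, contradicting regularity. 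Your two-regime decomposition replaces this one-step comparison: for edge length $r\le\epsilon_1$ the uniform normal-coordinate (Rauch) expansion pins the pairwise angles near $\pi/3$ and a cap-packing count on $S^{m-1}$ bounds the number of directions, while for $r\ge\epsilon_1$ you use disjointness of the balls $B(p_i,\epsilon_1/2)$ together with a uniform volume lower bound. The trade-offs are real: the paper's argument is shorter and needs only one-sided curvature control, but it is a pure contradiction and tacitly requires a uniformity point when the edge length $\ell$ itself shrinks along the sequence of simplices---``the comparison side tends to $0$'' must be strengthened to ``the comparison side is $<\ell$ uniformly over $\ell\le\mathrm{diam}(M)$ once $\varepsilon$ is small,'' which does hold (that side is comparable to $\sinh\bigl(\sqrt{|K|}\,\ell\bigr)\varepsilon/\sqrt{|K|}\le C(K,\mathrm{diam}\,M)\,\ell\varepsilon$) but is left implicit. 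Your version costs more setup (two-sided bounds $|K|\le\Lambda$ and the uniform $O(r^4)$ error, both legitimately free on a compact manifold) and yields in exchange an explicit bound $\max\{N_0(m)+1,\ \mathrm{Vol}(M)/v_1\}$ rather than a contradiction, while confronting the small-edge regime---exactly where the delicate uniformity lives---head-on.
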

\begin{proof}
  Now let $v_1$ and $v_2$ be two other vertices of $\Delta$, chosen so that the angle $\varepsilon=\measuredangle v_1v_0v_2$ is sufficiently small (possible for large $n$). Since $M$ is compact there is a global lower bound $K$ for its sectional curvature. By the Toponogov comparison theorem (\cite[\S 6.4.1, Theorem 73]{berg-pan}) the length of $v_1v_2$ is bounded above by the length of the third edge in an isosceles triangle with angle $\varepsilon$ subtending the two edges of equal length $\ell=v_0v_1=v_0v_2$ in the space form \cite[\S 6.3.2]{berg-pan} of constant curvature $K$. This length goes to $0$ as $\varepsilon$ does, contradicting $v_1v_2=\ell$.
\end{proof}

Large regular simplices are not the only obstruction to loose embeddability. The somewhat more sophisticated configurations that pose problems involve, roughly speaking, large sets of points each equidistant to large sets of pairs of points. To make sense of this we need some terminology.

\begin{definition}\label{def.flg}
  Let $n$ be a positive integer.  An {\it $n$-flag of median hyperplanes} is a collection of points
  \begin{equation}\label{eq:nflg}
    \{p_i, q_i,\ 0\le i\le n-1\}
  \end{equation}
  such that
  \begin{equation*}
    d(z,p_s) = d(z,q_s)
  \end{equation*}
  for all $z=p_i$ or $q_i$ with $i>s$. 
\end{definition}

The term `median hyperplane' is meant to invoke the locus of points in a Euclidean space that are equidistant from two given points, while `flag' means chain ordered by inclusion, as in
\begin{equation}\label{eq:1}
  \{p_i,q_i\}_{i\ge 0}\supset \{p_i,q_i\}_{i\ge 1}\supset\cdots.
\end{equation}

The relevance of the concept stems from the following simple remark.

\begin{lemma}\label{le.no-flg}
  A compact metric space containing $n$-flags of median hyperplanes is not LE. 
\end{lemma}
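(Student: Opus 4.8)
The plan is to prove the contrapositive in quantitative form: if $(X,d)$ admits a loose embedding $f\colon X\to\bR^N$ into a fixed finite-dimensional Euclidean space, then $X$ can contain an $n$-flag of median hyperplanes only for $n\le N$. Since ``LE'' means loosely embeddable into a single $\bR^N$, a space containing $n$-flags of unbounded size cannot be LE. The whole argument is a Euclidean dimension count, and the one conceptual input is that a loose embedding, being a \emph{well-defined} map on realized distances, carries equal distances to equal distances; hence it transports the \emph{equidistance} relations of \Cref{def.flg} even though it need not preserve the distances themselves.

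Concretely, I would fix an $n$-flag $\{p_i,q_i\ :\ 0\le i\le n-1\}$ and push it forward by $f$. For each $s$ and each $z\in\{p_i,q_i\}$ with $i>s$, the defining relation $d(z,p_s)=d(z,q_s)$ is an equality of realized distances in $X$, so well-definedness of the induced map forces $d(fz,fp_s)=d(fz,fq_s)$. Writing $a_s=fp_s$ and $b_s=fq_s$, this says that every image point of index $>s$ lies on the median hyperplane
\begin{equation*}
  H_s=\Bigl\{x\in\bR^N\ :\ \bigl\langle\, x-\tfrac{a_s+b_s}{2},\ a_s-b_s\,\bigr\rangle=0\Bigr\},
\end{equation*}
whose normal direction is the difference vector $v_s:=a_s-b_s$. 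Thus the images $\{a_i,b_i\}$ form an $n$-flag of genuine median hyperplanes inside $\bR^N$.

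The key step is then to read off pairwise orthogonality of the $v_s$. For $t>s$ both $a_t=fp_t$ and $b_t=fq_t$ lie on $H_s$, so their difference $v_t=a_t-b_t$ is orthogonal to the normal $v_s$; that is, $\langle v_t,v_s\rangle=0$ whenever $t>s$, and hence $v_0,\dots,v_{n-1}$ are pairwise orthogonal. They are also nonzero: the pairs of the flag are distinct, $p_s\ne q_s$, and a loose embedding is one-to-one, so $a_s\ne b_s$. A family of $n$ nonzero pairwise-orthogonal vectors in $\bR^N$ is linearly independent, whence $n\le N$, which is the asserted bound.

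This argument has no serious obstacle; the part deserving care is the bookkeeping that keeps it honest. One should track which half of the loose-embedding condition each step uses: the equidistance transport invokes only well-definedness (equal$\mapsto$equal), whereas the nondegeneracy $v_s\neq 0$ rests on injectivity of $f$, which is the one-to-one (unequal$\mapsto$unequal) half applied to the two distinct distance values $0=d(p_s,p_s)$ and $d(p_s,q_s)>0$; so both halves genuinely enter. It is worth noting that neither compactness of $X$ nor continuity of $f$ plays any role in this direction, the obstruction being purely combinatorial-geometric, so the conclusion holds verbatim for an arbitrary metric space carrying $n$-flags for arbitrarily large $n$.
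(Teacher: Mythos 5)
Your proof is correct and is essentially the paper's own argument: push the flag forward through the loose embedding, note that all later image points lie on the median hyperplanes of earlier pairs, and conclude by a dimension count in $\bR^N$. The only difference is cosmetic—where the paper asserts that the hyperplanes are ``orthogonal'' and hence more than $N$ of them intersect trivially, you make this explicit by showing the normal vectors $v_s=a_s-b_s$ are nonzero and pairwise orthogonal, which is a slightly cleaner way to finish the same dimension count.
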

\begin{proof}
  If such a space $(X,d)$ were loosely embeddable in $\bR^d$ say, then each of the sets \Cref{eq:1} would be contained in a hyperplane of $\bR^d$, namely the median hyperplane of (the images in $\bR^d$ of) $p_i$ and $q_i$. These hyperplanes would be orthogonal in the sense that their range projections commute, so any $>d$ of them would intersect trivially.
\end{proof}

On the other hand, Riemannian manifolds can still not be discounted as LE on the basis of \Cref{le.no-flg}.

\begin{theorem}\label{pr.riem-flg}
  A compact Riemannian manifold $(X,d)$ equipped with the geodesic metric cannot contain $n$-flags of median hyperplanes for arbitrarily large $n$. 
\end{theorem}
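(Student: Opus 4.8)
The plan is to adapt the comparison-geometry mechanism of \Cref{pr:riemsimpl} to the more elaborate incidence pattern of \Cref{def.flg}, replacing the exact orthogonality of median hyperplanes exploited in \Cref{le.no-flg} by an \emph{approximate} orthogonality valid up to curvature distortion, and then to cap the number of pairs by a packing estimate on the unit sphere of a single tangent space. First I would record the data furnished by compactness: a diameter bound $D$, a two-sided sectional-curvature bound $|\kappa|\le\Lambda$, and a positive injectivity radius. The structural observation driving everything is that in an $n$-flag the innermost point $w:=p_{n-1}$ is equidistant from $p_s$ and $q_s$ for every $s<n-1$ (take $z=w$ and $i=n-1>s$ in \Cref{def.flg}), and, more generally, both members of every later pair $(p_i,q_i)$ are equidistant from $p_s$ and $q_s$. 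Thus each pair is seen from $w$, and from every later pair, as the base of an isosceles configuration, exactly the situation to which the Toponogov comparison applies.

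Next I would pass to the tangent space $T_wM$ through the exponential chart, writing $P_s=\exp_w^{-1}(p_s)$ and $Q_s=\exp_w^{-1}(q_s)$, so that $|P_s|=|Q_s|=d(w,p_s)$ by the equidistance just noted. Setting $E_s:=P_s-Q_s$, a short computation using $|P_s|=|Q_s|$ turns the four median equalities $d(p_{s'},p_s)=d(p_{s'},q_s)$ and $d(q_{s'},p_s)=d(q_{s'},q_s)$ into the single identity $\langle E_{s'},E_s\rangle=\tfrac12\bigl[F(p_{s'},q_s)-F(p_{s'},p_s)-F(q_{s'},q_s)+F(q_{s'},p_s)\bigr]$, a mixed second difference of the distortion $F(x,y):=|\exp_w^{-1}x-\exp_w^{-1}y|^2-d(x,y)^2$. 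Since $F$ vanishes to high order along the diagonal, Rauch/Toponogov comparison, the same engine as in \Cref{pr:riemsimpl}, should bound this mixed difference bilinearly in the chord lengths, so that the normalized displacements $\widehat E_s$ become pairwise almost orthogonal, $|\langle \widehat E_{s'},\widehat E_s\rangle|\le c$ for a constant $c=c(\Lambda,D)$ independent of the (possibly small) chords. It is worth stressing that small pairs are harmless here, precisely as in the Euclidean prototype of \Cref{le.no-flg}, where the displacement vectors are \emph{exactly} orthogonal irrespective of their lengths.

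Finally, a family of unit vectors in $T_wM\cong\bR^{\dim M}$ with pairwise inner products bounded by a fixed $c<1$ has cardinality bounded in terms of $\dim M$ and $c$ alone (a standard almost-orthogonality/Gram-matrix estimate), which caps the number of pairs and hence $n$. The hard part will be the comparison step of the second paragraph: I must ensure that the constant $c$ is genuinely $<1$ rather than merely finite, which demands control of $F$ at \emph{macroscopic} scale, up to the diameter, and not just infinitesimally. Two difficulties surface here, namely points lying beyond the injectivity radius, where $\exp_w^{-1}$ is ill-defined, and the variable-curvature estimate for the mixed difference; I expect to resolve them by arguing intrinsically with Toponogov angles at the geodesic midpoints $m_s$, showing that every geodesic from a later point to $m_s$ meets the chord $p_sq_s$ within $\pi/2\pm\delta(\ell_s)$ with $\delta(\ell_s)\to0$ as $\ell_s\to0$, rather than through a single global chart, and, if necessary, by separating the regime of uniformly small chords (handled by the near-Euclidean geometry of a normal ball) from the spread-out regime (handled by the packing bound). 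An alternative more in the spirit of \Cref{le.no-flg} would regard each $H_s=\{\,z:d(z,p_s)=d(z,q_s)\,\}$ as a nearly totally geodesic equidistant hypersurface containing the sub-flag $\{p_i,q_i\}_{i>s}$ and induct on dimension; the obstacle there is the possible non-smoothness and uncontrolled induced geometry of equidistant sets, which is exactly what the tangent-space linearization above is designed to bypass.
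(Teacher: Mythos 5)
Your local algebra is fine (the polarization identity does convert the four flag equalities into a mixed second difference of the distortion $F$, and the final packing/Gram bound is standard), but the bridge between them --- the claim that Rauch/Toponogov comparison bounds that mixed difference by $c(\Lambda,D)\,|E_{s'}||E_s|$ with $c<1$ at macroscopic scale --- is not just the ``hard part'': it is false. The distortion $F$ beyond the injectivity radius is a cut-locus phenomenon, invisible to curvature comparison. Concretely, on the flat square torus $M=\mathbb{R}^2/\mathbb{Z}^2$ (so $\Lambda=0$, where any comparison-based bound would force \emph{exact} orthogonality), take $p_0=(0,0)$, $q_0=(\tfrac12,\tfrac12)$, $p_1=(\tfrac14+t,\tfrac14-t)$, $q_1=(\tfrac34+\tfrac t2,\tfrac14+\tfrac t2)$ for small $t>0$. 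Checking nearest lifts, $p_1$ and $q_1$ are each equidistant from $p_0$ and $q_0$, so this is a $2$-flag in the sense of \Cref{def.flg}; moreover $p_0,q_0,q_1$ all lie within the injectivity radius $\tfrac12$ of $w=p_1$, so your vectors $E_0,E_1$ are well defined. Yet $E_0=(-\tfrac12,-\tfrac12)$ while $E_1\to(-\tfrac12,0)$ as $t\to0$, so $\langle\widehat E_0,\widehat E_1\rangle\to\tfrac1{\sqrt2}$. Hence no constant manufactured from curvature and diameter controls the inner products, and your two proposed repairs do not close the hole: the Toponogov-midpoint estimate again only gives $\delta(\ell_s)\to0$ as $\ell_s\to0$ (here the offending pair has chord equal to the diameter), while small-chord pairs sitting far from $w$ --- e.g.\ $p_s=(\tfrac12,\tfrac12-u)$, $q_s=(\tfrac12,\tfrac12+u)$ viewed from $w=(0,0)$ --- straddle the cut locus of $w$, where $\exp_w^{-1}$ is ill-defined or wildly distorting, so they are not ``handled by the near-Euclidean geometry of a normal ball'' either.

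What is missing is a localization step, and it is exactly what the paper establishes first: by pigeonhole in the compact space $M\times M$, a very large flag contains many pairs $(p_i,q_i)$, $(p_j,q_j)$ ($i<j$) that are close to one another, and then the flag condition itself collapses their chords, since $d(p_j,q_j)\le d(p_j,q_i)+d(q_i,q_j)=d(p_j,p_i)+d(q_i,q_j)$ is small. Thus arbitrarily large flags contain large sub-flags inside arbitrarily small geodesic balls. This one observation dissolves both of your regimes: pairs with long chords are automatically few (they must be separated in $M\times M$), and the remaining pairs may be assumed to lie in a single ball that is $\varepsilon$-Euclidean in the sense of \Cref{def:euc}, where your mixed-difference estimate genuinely holds (with $c=\varepsilon'\to0$) and your packing bound would finish. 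The paper concludes differently and more economically after localizing: a second pigeonhole (on directions) produces two pairs with nearly parallel chords, and then the function $x\mapsto d(x,p_0)^2-d(x,q_0)^2$ has gradient nearly parallel to the velocity of $\gamma_{p_1}^{q_1}$, hence is strictly increasing along that geodesic --- contradicting that it vanishes at both endpoints. So your outline is salvageable as an alternative finish, but only after adding the localization lemma, which is the actual crux of the theorem and is absent from your proposal.
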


This will require some amount of preparation.

First, we will have some make some size estimates (for angles, distances, etc.). This raises the usual issue of starting with quantities that are within $\varepsilon>0$ of each other and then obtaining new estimates in terms of $\varepsilon$ such as, say $C\varepsilon$ for some constant $C$. In order to avoid such irrelevancies we make the following

\begin{convention}\label{cv:eps}
  $\varepsilon$ will typically denote a small positive real, and whenever a new small quantity depending on $\varepsilon$ is introduced, we denote it by decorating $\varepsilon$ with the usual symbols used to indicate differentiation. So for instance $\varepsilon'$, $\varepsilon''$, $\varepsilon^{(5)}$, etc. all denote small positive reals depending on $\varepsilon$ in some unspecified fashion.

  The same notational convention applies to other symbols meant to denote small positive reals. 
\end{convention}

In the discussion below we will modify the Riemannian tensor $g$ on a geodesic ball of a Riemannian manifold $(M,g)$ so as to ``flatten'' said ball. The relevant concept is

\begin{definition}\label{def:euc}
  Let $B\subset M$ be a geodesic ball in a Riemannian manifold $M$ with tensor $g$, and suppose we have fixed a coordinate system for $B$. We say that $g$ is {\it $\varepsilon$-Euclidean to order $k$} along $B$ if the derivatives of orders $\le k$ of $g$ within $\varepsilon$ of their usual Euclidean counterparts, uniformly on $B$, in the respective coordinate system.

  We typically omit $k$ from the discussion, simply assuming it is large enough ($k\ge 2$ will do for most of our purposes); for that reason, we abbreviate the phrase as {\it $\varepsilon$-Euclidean}.

  The specific $\varepsilon>0$ will also depend on the chosen coordinates, but we ignore this issue too, as the discussion below will only require $\varepsilon$ sufficiently small, and the various coordinate choices will not affect this.
\end{definition}

As a consequence of the smooth dependence of ODE solutions on the initial data (e.g. \cite[Theorem B.3]{dk}), ``sufficiently Euclidean'' Riemannian metrics in the sense of \Cref{def:euc} have ``sufficiently straight'' geodesics. More formally (keeping in mind \Cref{cv:eps}):

\begin{proposition}\label{pr:str}
Let $(M,g)$ be a Riemannian manifold, $\varepsilon$-Euclidean with respect to some coordinate system. Then, for every geodesic $\gamma$ in $M$, parallel transport of vectors along $\gamma$ does not alter angles by more than $\varepsilon'$
\qedhere
\end{proposition}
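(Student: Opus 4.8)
The plan is to read the claim off the linear ODE governing parallel transport, exploiting that in the fixed coordinate system the flat Euclidean model has vanishing Christoffel symbols. Concretely, a parallel vector field $V(t)$ along $\gamma$ satisfies $\dot V^i(t) = -\Gamma^i_{jk}(\gamma(t))\,\dot\gamma^j(t)\,V^k(t)$ in the chosen coordinates, and for the flat metric in Cartesian coordinates all the $\Gamma^i_{jk}$ vanish, so parallel transport is literally the identity in the coordinate frame. I would therefore show that $\varepsilon$-Euclideanity forces the coefficients of this ODE to be uniformly small on the ball, so that its solution operator is uniformly close to the identity; the (Euclidean, coordinate-wise) angles are then nearly preserved.

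The key estimate is that the Christoffel symbols are themselves small. Writing $\Gamma^i_{jk} = \tfrac12 g^{il}\big(\partial_j g_{kl} + \partial_k g_{jl} - \partial_l g_{jk}\big)$, the hypothesis of $\varepsilon$-Euclideanity to order $\ge 1$ gives $g_{ij} = \delta_{ij} + O(\varepsilon)$, hence $g^{ij} = \delta^{ij} + O(\varepsilon)$ is in particular bounded, while each first derivative $\partial_l g_{ij}$ is within $\varepsilon$ of its Euclidean value $0$. Thus $|\Gamma^i_{jk}| \le \varepsilon'$ uniformly over $B$ in the sense of \Cref{cv:eps}. Normalizing $\gamma$ to unit $g$-speed keeps its coordinate speed $|\dot\gamma|$ bounded (again because $g$ is nearly Euclidean), and its length inside the geodesic ball is bounded by the diameter of $B$; hence the integrated coefficient $\int |\Gamma^i_{jk}(\gamma)\,\dot\gamma^j|\,dt$ over the relevant parameter range is $O(\varepsilon)$.

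To finish, I would invoke exactly the continuous dependence on data alluded to before the statement. For a linear system $\dot V = -A(t)V$ with $\int \|A(t)\|\,dt \le \varepsilon''$, Gronwall's inequality bounds the solution operator by $\|P_t - \mathrm{id}\| \le e^{\varepsilon''} - 1 = \varepsilon'''$; this is the content of the cited \cite[Theorem B.3]{dk}. Consequently every parallel-transported unit vector, expressed in the coordinate basis, is displaced from its initial value by Euclidean angle $O(\varepsilon)$ (its $g$-norm, hence Euclidean norm up to $O(\varepsilon)$, being preserved). Finally, applying the triangle inequality for angles on the unit sphere, for any pair of vectors the Euclidean angle between their transports differs from the original Euclidean angle between them by at most $\varepsilon'$, which is the assertion.

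The main thing to get right is interpretational rather than computational. Since the Levi-Civita connection is metric-compatible, parallel transport is automatically a $g$-isometry and so preserves $g$-angles \emph{exactly}; the entire content of the proposition lies in comparing the ambient Euclidean coordinate angles with the intrinsic $g$-angles. A tempting one-line argument — $g$-angles and Euclidean angles agree up to $O(\varepsilon)$, and $g$-angles are preserved, so Euclidean angles move by $O(\varepsilon)$ — only controls the cosines directly and must be handled with care near (anti)parallel configurations, where $\arccos$ has unbounded derivative. The ODE route above sidesteps this pitfall because it bounds the displacement of each vector individually and then uses the spherical triangle inequality, so no cosine-to-angle conversion near the degenerate locus is required. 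The remaining work is simply to make each ``$O(\varepsilon)$'' uniform over $B$, which is where the uniformity built into \Cref{def:euc} and the boundedness of the ball are used.
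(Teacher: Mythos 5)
Your proposal is correct and follows essentially the same route as the paper, which offers no written-out proof at all: the proposition is asserted (note the \qedhere inside the statement) as a direct consequence of the smooth dependence of ODE solutions on their data, citing \cite[Theorem B.3]{dk}. Your argument --- smallness of the Christoffel symbols under $\varepsilon$-Euclideanity, Gronwall applied to the parallel-transport ODE, and the spherical triangle inequality to avoid the $\arccos$ degeneracy --- is exactly the standard implementation of that one-line justification, and correctly identifies that the content of the statement concerns coordinate (Euclidean) angles rather than $g$-angles, which parallel transport preserves exactly.
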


\begin{notation}
  Let $M$ be a Riemannian manifold with metric tensor $g$ and geodesic distance $d$. We write
  \begin{equation*}
    \mathrm{inj}(M)\ :=\text{ {\it injectivity radius} of }M
  \end{equation*}
  (\cite[p.271]{doC92} or \cite[p.142, Definition 23]{berg-pan}): the largest number such that all pairs of points less than $\mathrm{inj}(M)$ apart are joined by a unique geodesic segment.
  
For points $p,q$ in a Riemannian manifold $M$ with
\begin{equation*}
  \ell:=d(p,q) < \mathrm{inj}(M)
\end{equation*}
we write
  \begin{equation*}
    \gamma_p^q:[0,\ell]\to M
  \end{equation*}
  for the geodesic arc from $p$ to $q$, parametrized by arclength. We will also abuse notation and denote the image of $\gamma_p^q$ by the same symbol.
\end{notation}

\begin{definition}\label{def:angle}
  Let $p,q$ be points in a Riemannian manifold $M$, less than $\mathrm{inj}(M)$ apart. The {\it angle}
  \begin{equation*}
    \angle(v_p,v_q) 
  \end{equation*}
  between two tangent vectors $v_p\in T_pM$ and $v_q\in T_qM$ is defined by 
  \begin{itemize}
  \item parallel-transporting (\cite[Chapter 2, Proposition 2.6 and Definition 2.5]{doC92} or \cite[p.264, Proposition 61]{berg-pan}) the unit velocity vector $v_q$ to a vector $v\in T_pM$ along $\gamma_p^{q}$;
  \item set
    \begin{equation*}
      \angle(v_p,v_q) := \text{ angle between }v_p\text{ and }v,
    \end{equation*}
    computed in $T_p(M)$ as usual, via the Riemannian tensor. 
  \end{itemize}
  
  For points $p$, $q$, $p'$, $q'$ in $M$, each two less than $\mathrm{inj}(M)$ apart, the angle $\angle(\gamma_p^q,\gamma_{p'}^{q'})$ is the angle (defined as above) between the unit velocity vectors $(\gamma_p^q)'(0)$ and $(\gamma_{p'}^{q'})'(0)$.
\end{definition}

\begin{remark}
  Although \Cref{def:angle} appears to bias one of the pairs $p,q$ and $p',q'$ over the other, the notion is in fact symmetric: because parallel transport is an isometry between tangent spaces, whether we parallel-transport
  \begin{equation*}
    (\gamma_{p'}^{q'})'(0)\text{ to }T_pM
  \end{equation*}
  or 
  \begin{equation*}
    (\gamma_{p}^{q})'(0)\text{ to }T_{p'}M
  \end{equation*}
  does not affect the value of the angle. 
\end{remark}

For an $n$-dimensional Riemannian metric space $(M,d=d_M)$ with a basepoint $z\in M$ we will consider small geodesic balls
\begin{equation*}
  B_r=B_r(z):=\{q\in M\ |\ d(z,q)\le r\}
\end{equation*}
centered at $z$, parametrized with {\it normal coordinates} \cite[\S 4.4.1]{berg-pan} $x^i$, $1\le i\le n$ (so $z$ is identified with the origin $(0,\cdots,0)$). Recall that this means the geodesics emanating from $z$ are identified with straight line segments.

Having fixed such a coordinate system, we can speak about segments in $B$, angles between those segments, etc.; it will be clear from context when these are actual segments in the ambient $\bR^n$ housing $B$ rather than, say, {\it geodesic} segments in $M$.   

Typically, the radius $r$ decorating $B_r$ will be small. We will occasionally have to normalize the Riemannian metric in $B_r$, scaling distances from the origin $z=0\in B$ by $\frac 1r$ so that the new ball ${}_nB_r$ (`n' for `normalized') has radius $1$.

This normalization procedure has the effect of ``flattening'' the Riemannian metric, in the sense that the Riemannian structure can be made arbitrarily $\varepsilon$-Euclidean (\Cref{def:euc}) as $r\to 0$.  


In the discussion below, for a Riemannian manifold $M$ with geodesic metric $d=d_M$, we write
\begin{equation}\label{eq:eta}
  \eta(p,q) = \eta_M(p,q):=d(x,y)^2
\end{equation}
for the squared-distance function (the notation matches that in \cite{nic-rand} for instance, where this function features prominently).

\begin{lemma}\label{le:grad}
  Let $M$ be a Riemannian manifold and $B=B_r(z)$ a sufficiently small geodesic ball equipped with normal coordinates around $z\in M$. Let also $p\in B$ be a point and consider the function
  \begin{equation*}
    \psi:x\mapsto \eta(x,p).
  \end{equation*}
  with $\eta$ as in \Cref{eq:eta}. Denoting by $v\in T_zM$ the unit vector tangent to the geodesic $z\to p$, the gradient $\nabla \psi$ at $z$ equals $-2d(z,p)v$. 
\end{lemma}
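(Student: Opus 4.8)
The plan is to compute the gradient of $\psi(x) = \eta(x,p) = d(x,p)^2$ at the basepoint $z$ using the first variation of arclength, working in the normal coordinates around $z$. The key geometric input is the standard first-variation formula: for a point $p$ with $d(z,p)<\mathrm{inj}(M)$, the differential of the distance-to-$p$ function $x\mapsto d(x,p)$ at $z$ is given by $-\langle \cdot, v\rangle$, where $v$ is the unit velocity vector at $z$ of the (unique) minimizing geodesic $\gamma_z^p$ from $z$ to $p$. Equivalently, the gradient of $x\mapsto d(x,p)$ at $z$ is $-v$, the inward-pointing unit vector along that geodesic. I would first recall or cite this fact (e.g. from \cite{doC92}), taking care that the smallness of $B=B_r(z)$ guarantees $\gamma_z^p$ is the unique minimizing geodesic so that $d(\cdot,p)$ is smooth near $z$.

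Next I would pass from the distance function to its square by the chain rule. Writing $\rho(x):=d(x,p)$, we have $\psi=\rho^2$, so $\nabla\psi = 2\rho\,\nabla\rho$ at every point where $\rho$ is smooth. Evaluating at $z$ gives
\begin{equation*}
  \nabla\psi(z) = 2\rho(z)\,\nabla\rho(z) = 2\,d(z,p)\,(-v) = -2\,d(z,p)\,v,
\end{equation*}
which is exactly the claimed formula. The use of normal coordinates here is a convenience: at the origin $z$ the metric tensor is the identity and the Christoffel symbols vanish, so the Riemannian gradient agrees with the ordinary Euclidean gradient in these coordinates, and the geodesic $z\to p$ is a straight ray whose initial unit tangent is literally $v$. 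This makes the identification of $\nabla\psi(z)$ with $-2d(z,p)v$ transparent and coordinate-free in its statement.

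The one point requiring genuine care—rather than routine computation—is justifying the first-variation formula for $\nabla\rho$ at $z$ itself, since the squared-distance function $\eta$ is smooth across the diagonal but $\rho=d(\cdot,p)$ is only smooth away from $p$ (and $z\neq p$ here, so this is fine) and one must know that $z$ is not a cut point of $p$. I expect the main obstacle, such as it is, to be this smoothness and uniqueness bookkeeping: one needs $B_r(z)$ small enough (relative to $\mathrm{inj}(M)$ and to $d(z,p)$) that the minimizing geodesic from $z$ to $p$ is unique and varies smoothly, so that $\rho$ is differentiable at $z$ with the stated gradient. This is precisely what the hypothesis ``sufficiently small geodesic ball'' secures. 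Once that is in place, the chain-rule step is immediate, and the formula $\nabla\psi(z)=-2d(z,p)v$ follows.
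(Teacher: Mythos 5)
Your proof is correct and rests on the same underlying fact as the paper's: the gradient of the distance-to-$p$ function is the negative unit tangent of the minimizing geodesic (Gauss lemma / first variation), combined with the chain rule for $\rho^2$. The paper merely packages this more tersely by taking normal coordinates centered at $p$ rather than $z$, so that $\psi$ becomes literally $\sum_i (x^i)^2$; your version via $\nabla\rho(z)=-v$ is the same computation with the smoothness bookkeeping made explicit.
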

\begin{proof}
  This is immediate after choosing a normal coordinate system around $p$, whereupon $\psi$ becomes
  \begin{equation*}
    \psi:(x^1,\cdots,x^n)\to \sum_{i=1}^n (x^i)^2. 
  \end{equation*}
\end{proof}

\pf{pr.riem-flg}
\begin{pr.riem-flg}
  Suppose we {\it do} have arbitrarily large flags of median hyperplanes in our compact Riemannian space $(M,d)$. Since $M$ is compact, we can assume that some large flag \Cref{eq:nflg} is contained entirely within some small geodesic ball $B_r$ centered at a point $z:=p_n$ constituting the flag.

  We can assume $r$ is small enough that the normalized ball ${}_nB_{r}$ is $\varepsilon$-Euclidean in the sense of \Cref{def:euc}. Furthermore, because the size of the flag can also be chosen arbitrarily large, we can also assume that 
  \begin{equation*}
    \angle\left(\gamma_{p_i}^{q_i},\ \gamma_{p_j}^{q_j}\right)<\varepsilon',\ \forall 0\le i\ne j<n
  \end{equation*}

  Henceforth, it will be enough to work with $p_i$ and $q_i$ for $i=0,1$. By the flag condition, both $p_1$ and $q_1$ are equidistant from $p_0$ and $q_0$. Additionally, we have
  \begin{equation}\label{eq:pq01}
    \angle\left(\gamma_{p_1}^{q_1},\ \gamma_{p_0}^{q_0}\right)<\varepsilon'
  \end{equation}

  Furthermore, because the metric is $\varepsilon$-Euclidean, the unit-length velocity vectors $v_x$ along
  \begin{equation*}
    \gamma:=\gamma_{p_1}^{q_1}
  \end{equation*}
  stay within an angle of $\varepsilon''$ of the initial velocity vector $(\gamma_{p_1}^{q_1})'(0)$ (by \Cref{pr:str}), so \Cref{eq:pq01} implies that
  \begin{equation}\label{eq:vx}
    \angle\left(v_x,\ (\gamma_{p_0}^{q_0})'(0)\right)<\varepsilon^{(3)},\ \forall x\in \gamma_{p_1}^{q_1}. 
  \end{equation}

For each $x\in \gamma$, we saw in \Cref{le:grad} that the gradient of the function
\begin{equation}\label{eq:distdiff}
  \psi:x\mapsto d(x,p_0)^2 - d(x,q_0)^2 
\end{equation}
  is
  \begin{equation}\label{eq:realvect}
    2d(x,q_0)(\gamma_x^q)'(0) - 2d(x,p_0)(\gamma_x^p)'(0).
  \end{equation}

This is the parallel transport of $2\overrightarrow{p_0q_0}$ to $x$ in the usual, Euclidean metric, so by our assumption that the original metric is $\varepsilon$-Euclidean the angle between \Cref{eq:realvect} and $(\gamma_{p_0}^{q_0})'(0)$ is $<\varepsilon^{(4)}$. To summarize, we have
  \begin{itemize}
  \item a small angle between each gradient $\nabla_x \psi$ of $\psi$ along $\gamma$, given by \Cref{eq:realvect}, and $(\gamma_{p_0}^{q_0})'(0)$;  
  \item a small angle between the latter and the unit tangent vectors $v_x$ at $x\in \gamma$, by \Cref{eq:vx}.
  \end{itemize}
  In particular, at each $x$ along $\gamma$ the gradient $\nabla_x\psi$ and the velocity along $\gamma$ have positive inner product. This means that the function $\psi$ in \Cref{eq:distdiff} increases strictly along the geodesic $\gamma$, contradicting the fact that it must take the value $0$ at both endpoints $p_1$ and $q_1$.
\end{pr.riem-flg}

\section{Questions}\label{se:qu}

\Cref{pr:riemsimpl} and \Cref{pr.riem-flg} seem to suggest that compact Riemannian metric spaces are particularly amenable to loose metric embeddability. I do not know whether they are always LE, but that problem decomposes naturally: first,

\begin{question}\label{qu:1}
Let $(X,d)$ be a compact metric space and $N\in \bZ_{>0}$ a positive integer such that every finite subspace of $(X,d)$ is loosely embeddable into $\bR^N$. Does it follow that $(X,d)$ itself is LE? 
\end{question}

In other words, does {\it uniform} loose embeddability for the finite subspaces of $(X,d)$ entail the LE property for $X$ as a whole?

Secondly, to circle back to the Riemannian context:

\begin{question}\label{qu:2}
  Do compact Riemannian metric spaces satisfy the hypothesis of \Cref{qu:1}?
\end{question}

We conclude with a partial answer to \Cref{qu:1}. First, we need

\begin{definition}\label{def:weakle}
A metric space $(X,d_X)$ is {\it weakly loosely embeddable} (or {\it weakly LE}) in the metric space $(Y,d_Y)$ if there is an injective map $f:X\to Y$ satisfying only the backwards implication in the biconditional implicit in \Cref{def:le}:   
  \begin{equation}\label{eq:wle}
    d_Y(fx,fx') = d_Y(fz,fz') \Leftarrow d_X(x,x') = d_X(z,z').
  \end{equation}
\end{definition}

\begin{theorem}\label{pr:riemq1}
Under the hypotheses of \Cref{qu:1}, a compact Riemannian metric space is weakly LE in $\bR^N$. 
\end{theorem}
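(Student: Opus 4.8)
The plan is to recast weak loose embeddability as an honest isometric embedding of a \emph{relabelled} metric. Note first that the implication \Cref{eq:wle} forces $\|fx-fy\|$ to depend only on $d(x,y)$; hence producing a weak loose embedding of $X$ into $\bR^N$ is the same as producing a function $\phi\colon[0,D]\to[0,\infty)$ (with $D=\operatorname{diam}(X)$) satisfying $\phi(t)=0\iff t=0$, together with an isometric embedding of the relabelled space $(X,\phi\circ d)$ into $\bR^N$. The Riemannian hypothesis enters already here: since $X$ is a compact \emph{connected} Riemannian metric space (\Cref{def:riem}), the continuous map $d\colon X\times X\to\bR$ has connected, compact image containing $0$ and $D$, so its range is exactly the interval $[0,D]$. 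This is the structural reason that a single $\phi$, defined on an honest interval, is the right object to chase.

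To build $\phi$ I would fix an ultrafilter $\cU$ refining the inclusion order on the finite subsets $F\subseteq X$. For each $F$ pick a loose embedding $g_F\colon F\to\bR^N$ and normalize it by a Euclidean similarity so that it sends a fixed basepoint to the origin and has image of diameter $1$; then $g_F(F)$ lies in the closed unit ball. By \Cref{def:le} the quantity $\|g_Fx-g_Fy\|$ depends only on $d(x,y)$, yielding an injective function $\phi_F$ on the finite set of distances occurring in $F$, with values in $[0,1]$. For a fixed pair $x,y$ the numbers $\phi_F(d(x,y))=\|g_Fx-g_Fy\|$ stay in $[0,1]$, so $\phi(d(x,y)):=\lim_{\cU}\phi_F(d(x,y))$ exists; two pairs with equal $d$ are forced to equal $g_F$-distance, so $\phi$ is well defined on $[0,D]$, and $f(x):=\lim_{\cU}g_F(x)$ (a limit in the compact unit ball) satisfies $\|fx-fy\|=\phi(d(x,y))$, which is precisely \Cref{eq:wle}.

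It remains to confirm that the target is really $\bR^N$ and that $f$ is injective. For the dimension constraint I would pass embeddability to the limit via Schoenberg's criterion: for a finite configuration, isometric embeddability of $(F,\psi\circ d)$ into $\bR^N$ is equivalent to the associated Gram matrix of squared distances being positive semidefinite of rank $\le N$, a \emph{closed} condition on the matrix entries. Since each $(F,\phi_{F'}\circ d)$ with $F'\supseteq F$ embeds and the corresponding matrices converge along $\cU$ to those of $(F,\phi\circ d)$, the limit configuration embeds as well. Thus every finite subspace of $(X,\phi\circ d)$ embeds isometrically in $\bR^N$, and the Menger--Blumenthal local-to-global theorem of Euclidean distance geometry (applicable because $X$ is compact, hence separable) upgrades this to an isometric map $(X,\phi\circ d)\to\bR^N$, still denoted $f$.

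The genuine obstacle is injectivity, i.e.\ that $\phi(t)>0$ for every $t>0$: all the previous steps survive verbatim for a pseudometric and would merely collapse the offending points. Here I would exploit that a Riemannian metric space is a \emph{geodesic} (length) space, combined with the triangle inequality for $\rho=\phi\circ d$: splitting a minimizing geodesic into subarcs shows that $Z:=\{t:\phi(t)=0\}$ is closed under addition within $[0,D]$, and since the distance set is the whole interval $[0,D]$ one wants to force $Z=\{0\}$. The delicate point---and where I expect the real difficulty to lie---is excluding ``metric inversion in the limit'', in which arbitrarily short distances are assigned the maximal normalized length and $\phi$ degenerates near the origin; ruling this out should use total boundedness of $X$ together with the prohibition of large regular simplices from \Cref{pr:riemsimpl}, which prevents many points from being simultaneously sent to mutually maximal distances in the fixed-dimensional space $\bR^N$.
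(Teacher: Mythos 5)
Your overall architecture---normalized loose embeddings $g_F$ of the finite subsets into the unit ball, a limit along an ultrafilter (the paper uses the net over $(\cF,\subseteq)$, which is the same device), and the observation that $\|g_Fx-g_Fy\|$ depends only on $d(x,y)$ so that a single function $\phi$ survives the limit---matches the paper's proof. The Schoenberg/Menger--Blumenthal detour is harmless but redundant: since each $g_F$ lands in the compact unit ball, the ultralimit $f$ already takes values in $\bR^N$, so there is nothing to ``upgrade.''

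The genuine gap is exactly where you flag it: injectivity, i.e.\ $\phi(t)>0$ for $t>0$, is not proved, and the sketch you give points in the wrong direction. Closure of $Z=\{t:\phi(t)=0\}$ under addition is useless here---what you need is \emph{downward} propagation: if some $t_0>0$ collapses, then all sufficiently small $t$ collapse. The paper gets this from isosceles geodesic triangles with sides $\ell_\alpha,\ell_\alpha,t$: applying $\psi_\alpha$ and the Euclidean triangle inequality bounds $\varphi_\alpha(t)$ by a multiple of $\varphi_\alpha(\ell_\alpha)\to 0$. The second missing ingredient is how a collapse of all small distances yields a contradiction: this is where the diameter-one normalization earns its keep. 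The limit map still has two points $x,y$ with $|\psi x-\psi y|=1$, yet $x$ and $y$ are joined by a broken geodesic of finitely many segments of some small length $t$ with $\varphi(t)=0$, forcing $|\psi x-\psi y|=0$. Neither step appears in your proposal; instead you pivot to ``metric inversion'' (short distances sent to large ones), which is the \emph{continuity} issue, not injectivity, and the tools you suggest for it are off target: the paper rules it out not via \Cref{pr:riemsimpl} but by producing arbitrarily many $\varepsilon/2$-separated points in the compact ball $B$ (again via isosceles triangles), contradicting total boundedness of $B$, not of $X$. Large regular simplices (equidistant configurations) are not what obstructs here---merely uniformly separated image configurations. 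So while your framework is the right one, the two concrete geometric arguments that make the theorem true are absent.
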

\begin{proof}
  Let $(M,d)$ be a compact Riemannian manifold with its geodesic metric and denote by $(\cF,\subseteq)$ the poset of finite subsets $F\subset M$ (ordered by inclusion). For each $F\in \cF$ we fix a map
  \begin{equation*}
    \psi_F:F\to B:=\text{ origin-centered unit ball in }\bR^N
  \end{equation*}
  such that
  \begin{itemize}
  \item $\psi_F$ is a loose embedding of $(F,d)$, rescaled if needed so as to ensure it lands in the ball $B$;
  \item the diameter of $\psi_F(F)$ is precisely $1$, with $\psi_F p=0$ and $\psi_F q$ on the unit sphere $\partial B$ for some $p,q\in F$.
  \end{itemize}
  This gives us an $\cF$-indexed net \cite[Chapter 3, p.187]{mun} $\psi_F$ of maps $F\to B$, and since
  \begin{itemize}
  \item $B$ is compact;    
  \item every element $p\in M$ belongs to sufficiently large $F\in \cF$, i.e. to the upward-directed set
    \begin{equation*}
      \{F\in \cF\ |\ p\in F\},
    \end{equation*}
  \end{itemize}
  we can take the pointwise limit
  \begin{equation*}
    \psi(p):= \lim_{\cF}\psi_{F}(p)\in B
  \end{equation*}
  to obtain a map $\psi:M\to B$. it remains to prove that $\psi$
  \begin{enumerate}[(a)]
  \item\label{item:2} satisfies the weak LE condition \Cref{eq:wle};
  \item\label{item:1} is continuous;  
  \item\label{item:3} is one-to-one.     
  \end{enumerate}

  {\bf \Cref{item:2}: condition \Cref{eq:wle}.}
  We want to prove that
  \begin{equation}\label{eq:back}
    |\psi x-\psi x'| = |\psi z-\psi z'| \Leftarrow d_M(x,x') = d_M(z,z')
  \end{equation}
  holds; this follows by passing to the limit over $F\in \cF$ in the analogous implication for the partially-defined maps $\psi_F:F\to B$.

  We can now define a map
  \begin{equation}\label{eq:phi}
    \varphi:(\text{set of distances }d_M(p,q))\to \bR_{\ge 0}
  \end{equation}
  by
  \begin{equation}\label{eq:phi}
    \varphi(d_M(p,q)) = |\psi p-\psi q|. 
  \end{equation}
  We define the maps $\varphi_F$, $F\in \cF$ similarly, substituting $\psi_F$ for $\psi$ in \Cref{eq:phi}.

  {\bf \Cref{item:1}: $\psi$ is continuous.} We have to argue that
  \begin{equation*}
    \lim_{d\to 0}\varphi(d) = 0.
  \end{equation*}
  If not, we can find a subnet $(F_{\alpha})_{\alpha}$ of $\cF$ and points $p_{\alpha}, q_{\alpha}\in F_{\alpha}$ such that
  \begin{equation*}
    d_M(p_{\alpha},q_{\alpha})\to 0
  \end{equation*}
  but
  \begin{equation}\label{eq:eps}
    \varepsilon:=\inf_{\alpha} |\psi_{F_{\alpha}} p_{\alpha}-\psi_{F_{\alpha}} q_{\alpha}| > 0;
  \end{equation}
  we abbreviate
  \begin{equation*}
    \psi_{\alpha}:=\psi_{F_{\alpha}},
  \end{equation*}
  and similarly for $\varphi$. 

  If $\ell>0$ is sufficiently small (smaller than the injectivity radius of $M$, for instance \cite[Definition following Theorem III.2.3]{chav}), then $(M,d_M)$ contains geodesic triangles with edges
  \begin{equation*}
    \ell,\ \ell,\ t
  \end{equation*}
  for every $2\ell>t>0$. This can easily be seen, for instance, by continuously decreasing the angle between two length-$\ell$ geodesic rays based at a point from $\pi$ to $0$; the distance between the extremities of those geodesic rays will then decrease continuously from $2\ell$ to $0$.

  Now fix some $\ell>0$, sufficiently small. We will have $d_M(p_{\alpha},q_{\alpha})<2\ell$ for sufficiently large $\alpha$, and hence, by the preceding remark, we can find geodesic triangles in $M$ with edges $\ell$, $\ell$ and $d_M(p_\alpha,q_{\alpha})$ (assuming also that $\alpha$ is large enough to ensure that $F_{\alpha}$ contains the tip of that isosceles geodesic triangle).

  Applying $\psi_{\alpha}$, we have a triangle in $B$ with edges
  \begin{equation*}
   \varphi_{\alpha}(\ell),\ \varphi_{\alpha}(\ell),\ \varphi_{\alpha}(d_M(p_{\alpha},q_{\alpha})). 
  \end{equation*}
  In particular, we have
  \begin{equation}\label{eq:eps2}
    \varphi_{\alpha}(\ell) \ge \frac{\varphi_{\alpha}(d_M(p_{\alpha},q_{\alpha}))}2 \ge \frac{\varepsilon}2 > 0
  \end{equation}
  by \Cref{eq:eps}. Since $\ell>0$ was arbitrary (so long as it was small enough), this means that by passing to large enough $\alpha$ we can find arbitrarily large finite subsets $F$ of $M$, of girth $\ge \ell$ (i.e. so that all pairs of points are at least $\ell$ apart), and hence so that (by \Cref{eq:eps2})
  \begin{equation*}
    |\psi p-\psi q|\ge \frac \varepsilon 2,\ \forall p,q\in F. 
  \end{equation*}
  Since the cardinality of $F$ (and hence that of $\psi(F)$) can be made arbitrarily large, we are contradicting the compactness of $B$. This completes the proof of \Cref{item:1} above. 
  
  {\bf \Cref{item:3}: $\psi$ is injective.} Suppose not. In a sense, this means we are in precisely the opposite situation to that encountered in the proof of part \Cref{item:1}: there is a subnet $(F_{\alpha})_{\alpha}$ of $\cF$ with  points $p_{\alpha},\ q_{\alpha}\in F_{\alpha}$ such that
  \begin{align*}
    \ell:=\inf_{\alpha} d_M(p_{\alpha},q_{\alpha}) &> 0 \\
    \inf_{\alpha} |\psi_{\alpha} p_{\alpha} - \psi_{\alpha}q_{\alpha}| &= 0. \numberthis \label{eq:conv0}
  \end{align*}
  By compactness, we can also assume $p_{\alpha}$ and $q_{\alpha}$ are convergent and hence in particular that the distances
  \begin{equation*}
    \ell_{\alpha}:=d_M(p_{\alpha},q_{\alpha})
  \end{equation*}
  are as well. For sufficiently small $t>0$ there are triangles in $M$ with edges
  \begin{equation*}
    \ell_{\alpha},\ \ell_{\alpha},\ t
  \end{equation*}
  for all $\alpha$ (consider two geodesic rays of length $\ell_{\alpha}$ with common origin, subtending small angles at said origin). But then an application of one of the $\psi_{\alpha}$ will yield a triangle with edges
  \begin{equation*}
    \varphi_{\alpha}(\ell_{\alpha}),\ \varphi_{\alpha}(\ell_{\alpha}),\ \varphi_{\alpha}(t)
  \end{equation*}
  with $\varphi$ and $\varphi_{\alpha}:=\varphi_{F_{\alpha}}$ as in \Cref{eq:phi} and subsequent discussion, meaning that
  \begin{equation*}
    \varphi_{\alpha}(t)\le \frac{\varphi_{\alpha}(\ell_{\alpha})}{2}.  
  \end{equation*}
  Since the right hand side converges to zero by \Cref{eq:conv0}, we conclude that $\varphi(t)=0$ for {\it all} sufficiently small $t>0$. In other words,
  \begin{equation}\label{eq:psicollapse}
    \psi \text{ identifies any two points that are sufficiently close.}
  \end{equation}  
  Now, for each $\alpha$ we also have, by assumption, points $x_{\alpha}, y_{\alpha}$ in $F_{\alpha}$ that achieve distance $1$ upon applying $\psi_{\alpha}$:
  \begin{equation*}
    |\psi_{\alpha} x_{\alpha} - \psi_{\alpha}y_{\alpha}| = 1
  \end{equation*}
  By compactness, passage to a subnet if necessary allows us to assume that $x_{\alpha}$ and $y_{\alpha}$ converge to $x$ and $y$ in $M$ respectively, and limiting over $\alpha$ produces $|\psi x - \psi y| = 1$.

  For some distance $t>0$ small enough to qualify for \Cref{eq:psicollapse} we can find a broken geodesic consisting of some finite number $N$ of length-$t$ segments
  \begin{equation*}
    x=:p_0\to p_1,\quad p_1\to p_2,\ \cdots,\ p_{N-1}\to p_N:=y
  \end{equation*}
  connecting $x$ and $y$. Applying $\psi$ we similarly obtain a broken geodesic consisting of $N$ length-$\varphi(t)$ segments connecting $x$ and $y$, but the latter are distance $1$ apart while $\varphi(t)=0$ by \Cref{eq:psicollapse}. This gives the contradiction we seek and finishes the proof.
\end{proof}

\begin{remark}\label{re:corn}
  As noted above, compact connected Riemannian manifolds are known to have quantum isometry groups and the latter are classical. We note however that the results proven above for Riemannian manifolds involve only {\it local} considerations. This means that, for instance, \Cref{pr:riemsimpl,pr.riem-flg,pr:riemq1} apply to submanifolds {\it with corners} of compact Riemannian manifolds.

  Recall \cite[Definition 2.1]{joy} that the latter are manifolds modeled as usual, via atlases, on the spaces $\bR_{\ge 0}^k\times \bR^{n-k}$. One can obtain interesting metric spaces by
  \begin{itemize}
  \item starting with a Riemannian manifold;
  \item cutting out a domain bounded by hypersurfaces intersecting transversally ;
  \item restricting the global geodesic metric to that domain. 
  \end{itemize}
  As soon as one goes beyond manifolds with boundary such spaces are not covered by the main results of \cite{cg}.
\end{remark}



\begin{thebibliography}{10}

\bibitem{berg-pan}
Marcel Berger.
\newblock {\em A panoramic view of {R}iemannian geometry}.
\newblock Springer-Verlag, Berlin, 2003.

\bibitem{chav}
Isaac Chavel.
\newblock {\em Riemannian geometry---a modern introduction}, volume 108 of {\em
  Cambridge Tracts in Mathematics}.
\newblock Cambridge University Press, Cambridge, 1993.

\bibitem{chi-qmtr-count}
Alexandru Chirvasitu.
\newblock Quantum isometries and loose embeddings, 2020.
\newblock arXiv:2004.09962.

\bibitem{cg}
Alexandru Chirvasitu and Debashish Goswami.
\newblock Existence and {R}igidity of {Q}uantum {I}sometry {G}roups for
  {C}ompact {M}etric {S}paces.
\newblock {\em Comm. Math. Phys.}, 380(2):723--754, 2020.

\bibitem{doC92}
Manfredo~Perdig{\~a}o do~Carmo.
\newblock {\em Riemannian geometry}.
\newblock Mathematics: Theory \& Applications. Birkh\"auser Boston, Inc.,
  Boston, MA, 1992.
\newblock Translated from the second Portuguese edition by Francis Flaherty.

\bibitem{dk}
J.~J. Duistermaat and J.~A.~C. Kolk.
\newblock {\em Lie groups}.
\newblock Universitext. Springer-Verlag, Berlin, 2000.

\bibitem{metric}
Debashish Goswami.
\newblock Existence and examples of quantum isometry groups for a class of
  compact metric spaces.
\newblock {\em Adv. Math.}, 280:340--359, 2015.

\bibitem{joy}
Dominic Joyce.
\newblock On manifolds with corners.
\newblock In {\em Advances in geometric analysis}, volume~21 of {\em Adv. Lect.
  Math. (ALM)}, pages 225--258. Int. Press, Somerville, MA, 2012.

\bibitem{mun}
James~R. Munkres.
\newblock {\em Topology}.
\newblock Prentice Hall, Inc., Upper Saddle River, NJ, 2000.
\newblock Second edition of [ MR0464128].

\bibitem{nic-rand}
Liviu~I. Nicolaescu.
\newblock Random morse functions and spectral geometry, 2012.
\newblock arXiv:1209.0639.

\end{thebibliography}
\def\polhk#1{\setbox0=\hbox{#1}{\ooalign{\hidewidth
  \lower1.5ex\hbox{`}\hidewidth\crcr\unhbox0}}}

\addcontentsline{toc}{section}{References}

\Addresses

\end{document}